\newcommand{\bC}{\mathbf{C}}
\newcommand{\bN}{\mathbf{N}}
\newcommand{\bK}{\mathbf{K}}
\newcommand{\ord}{\mbox{\rm ord }}
\newcommand{\ch}{\mathrm{char}\,}
\newlength{\szer}
\newtheorem{defi}{Definition}[section]
\newtheorem{nota}[defi]{Remark}
\newtheorem{ejemplo}[defi]{Example}
\newtheorem{teorema}[defi]{Theorem}
\newtheorem{lema}[defi]{Lemma}
\newtheorem{coro}[defi]{Corollary}
\newenvironment{proof}[1][Proof]{\textbf{#1.} }{\
\rule{0.5em}{0.5em}}
\begin{document}
\title{The Milnor number of plane irreducible singularities in positive characteristic
\footnotetext{
     \noindent   \begin{minipage}[t]{4in}
       {\small
       2010 {\it Mathematics Subject Classification:\/} Primary 32S05;
       Secondary 14H20.\\
       Key words and phrases: plane singularity, Milnor number, degree of conductor, factorization of polar curve.\\
       The first-named author was partially supported by the Spanish Project
       MTM 2012-36917-C03-01.}
       \end{minipage}}}

\author{Evelia R.\ Garc\'{\i}a Barroso and Arkadiusz P\l oski}

\maketitle

\begin{abstract}
\noindent Let $\mu(f)$ resp. $c(f)$ be the Milnor number resp. the degree of the conductor of an
irreducible power series $f\in \bK[[x,y]]$, where $\bK$ is an algebraically closed field of characteristic $p\geq 0$. It is
well-known that $\mu(f)\geq c(f)$. We give necessary and sufficient conditions for the equality $\mu(f)=c(f)$ in terms
of the semigroup associated with $f$, provided that $p>\ord f$.
\end{abstract}

\section*{Introduction}
\label{intro}
\noindent  Let $\bK$ be an algebraically closed field of characteristic $p\geq 0$ and let $f\in \bK[[x,y]]$ be a reduced (without multiple factors)  power series. Denote by $\overline{{\cal O}}$ the normalization of the ring ${\cal O}=\bK[[x,y]]/(f)$ and consider the conductor ideal ${\cal C}$ of $\overline{{\cal O}}$ in ${\cal O}$. The integer $c(f)=\dim_{\bK} {\cal O}/{\cal C}$ is called the degree of the conductor. Since ${\cal O}$ is  Gorenstein we have $c(f)=2\delta(f)$, where $\delta(f)=\dim_{\bK} \overline{{\cal O}}/{\cal O}$ is the double point number. Recall that $\mu(f)=\dim _{\bK} \bK[[x,y]]/\left(\frac{\partial f}{\partial x}, \frac{\partial f}{\partial y}\right)$ is the Milnor number of $f$.

\medskip

\noindent If $\ch \bK=0$ then the Milnor formula holds : $\mu(f)=2\delta(f)-r(f)+1$, where $r(f)$ is the number of distinct irreducible factors of $f$ (see \cite{Milnor}, \cite{Risler}). If the  characteristic $\ch \bK$ is arbitrary then $\mu(f)\geq 2\delta(f)-r(f)+1$ (see \cite{Deligne}, \cite{Melle}) and the equality $\mu(f)=2\delta(f)-r(f)+1$ ($\mu(f)=c(f)$ if $f$ is irreducible) means that $f$ has not {\em wild vanishing cycles}. It is the case if $f$ is Newton non degenerate (see \cite{BGreuel}) or if $p$ is greater than the intersection number of $f$ with its generic polar (see \cite{Nguyen}).

\medskip

\noindent The aim of this note is to give necessary and sufficient conditions for the equality $\mu(f)=c(f)$ in terms of the semigroup associated with the irreducible series $f$, provided that $p> \ord f$ (the order of $f$). Our result gives a partial 
answer to the question raised by G.M. Greuel and Nguyen Hong Duc in \cite{Greuel}.

\section{Main result}
\label{MR}

\noindent Let $f$ be an irreducible power series in $\bK[[x,y]]$, where $\bK$ is an algebraically closed field of characteristic
$p\geq 0$. The semigroup $\Gamma(f)$ associated with the branch $f=0$ is defined as the set of intersection numbers
$i_0(f,h)=\dim_{\bK} \bK[[x,y]]/(f,h)$, where $h$ runs over all power series such that $h \not\equiv 0$ (mod $f$).

\medskip

\noindent Let $\overline{\beta_0},\ldots,\overline{\beta_g}$ be the minimal
sequence of generators of $\Gamma(f)$ defined by the conditions
\begin{itemize}
\item $\overline{\beta_0}=\min (\Gamma(f)\backslash\{0\})=\ord f$,
\item $\overline{\beta_k}=\min (\Gamma(f)\backslash \bN \overline{\beta_0}+\cdots+\bN \overline{\beta_{k-1}})$ for $k\in\{1,\ldots,g\},$
\item $\Gamma(f)=\bN \overline{\beta_0}+\cdots+\bN \overline{\beta_{g}}$.
\end{itemize}

\noindent Let $e_k=\gcd(\overline{\beta_0},\ldots,\overline{\beta_k})$ for $k\in\{1,\ldots,g\}.$ Then $e_0>e_1>\cdots e_{g-1}>e_g=1$ and $e_{k-1}\overline{\beta_k}<e_{k}\overline{\beta_{k+1}}$ for $k\in\{1,\ldots,g-1\}.$ Let $n_k=e_{k-1}/e_k$ for 
$k\in\{1,\ldots,g\}.$ Then $n_k>1$ for $k\in\{1,\ldots,g\}$ and $n_k\overline{\beta_k}< \overline{\beta_{k+1}}$ for $k\in\{1,\ldots,g-1\}.$

\medskip

\noindent The degree of the conductor $c(f)$ is equal to the smallest element of $\Gamma(f)$ such that $c(f)+N\in \Gamma(f)$ for all integers $N\geq 0$. It is given by the conductor formula: $c(f)=\sum_{k=1}^g
(n_k-1){\overline{\beta_k}}-\overline{\beta_0}+1$.

\medskip

\noindent  For the proof of the above equality we refer the reader to \cite{GB-P}.

\medskip

\noindent The Milnor number $\mu(f)$ is not, in general, determined by $\Gamma(f)$. The following example is borrowed from \cite{BGreuel}: take $f=x^p+y^{p-1}$ and $g=(1+x)f$, where $p>2$. Then $\Gamma(f)=\Gamma(g)$, $\mu(f)=+\infty$ and $\mu(g)=p(p-2)$. By a plane curve singularity we mean a  nonzero power series of order greater than 1. The aim of this note is

\begin{teorema}[Main result]
\label{main result}
Let $f\in \bK[[x,y]]$ be an irreducible singula\-ri\-ty and let $\overline{\beta_0},\ldots,\overline{\beta_g}$ be the minimal
system of generators of $\Gamma(f)$. Suppose that $p=\ch \bK>\ord f$. Then the following two conditions are equivalent:
\begin{enumerate}
\item $\overline{\beta_k}\not\equiv 0$ (mod $p$) for $k\in\{1,\ldots,g\},$
\item $\mu(f)=c(f)$.
\end{enumerate}
\end{teorema}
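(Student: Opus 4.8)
The plan is to express $\mu(f)$ as a sum of intersection numbers over the branches of the polar $f_y:=\partial f/\partial y$, to bound this sum from below by $c(f)$ through a ``chain rule'' estimate along the polar branches, and then to read off the equality case from a Merle-type factorisation of the polar; the hypothesis $p>\ord f$ enters in order to force all the orders that occur to be prime to $p$.

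\emph{Normalisation and the polar intersection number.} Put $n=\ord f$ and $f_x:=\partial f/\partial x$. Since each $e_k$ and each $n_k$ divides $n<p$, we have $p\nmid e_k$ and $p\nmid n_k$ for every $k$. After a generic linear change of coordinates I may assume $x$ is transverse to $f$, i.e.\ $i_0(f,x)=n$; inspecting the degree-$n$ part of $f$ and using $n<p$, this forces $f_y\not\equiv 0$, $\ord f_y=n-1$, and $x$ transverse to every branch of $f_y$. As $p\nmid n$, the branch carries a primitive parametrisation $\varphi(t)=(t^{n},y(t))$ with $y(t)\in t\bK[[t]]$, and $i_0(f,h)=\ord_t h(\varphi(t))$ for every $h\not\equiv 0$ (mod $f$). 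Writing $f$ as a unit times $\prod_{\zeta^{n}=1}(y-y(\zeta t))$ with $t=x^{1/n}$, one obtains that $f_y(\varphi(t))$ is a unit times $\prod_{\zeta^{n}=1,\,\zeta\neq 1}(y(t)-y(\zeta t))$, whence $i_0(f,f_y)=\sum_{\zeta\neq 1}\ord_t(y(t)-y(\zeta t))$. Because $p\nmid n$, the $n$-th roots of unity behave as in characteristic zero and $\ord_t(y(t)-y(\zeta t))$ depends only on the order of $\zeta$; grouping the $\zeta$ by order and using the (characteristic-free, since $p\nmid n$) description of $\supp y(t)$ through the generators $\overline{\beta_k}$, this sum equals $\sum_{k=1}^{g}(n_k-1)\overline{\beta_k}$. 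Combined with the conductor formula this gives $i_0(f,f_y)=c(f)+n-1=c(f)+\ord f_y$.

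\emph{The estimate.} Let $P$ be an irreducible factor of $f_y$, parametrised by $\sigma(s)=(x(s),y(s))$. Then $f_y(\sigma(s))=0$, so $\frac{d}{ds}f(\sigma(s))=x'(s)\,f_x(\sigma(s))$. Taking orders in $s$, using $\ord_s(h')\geq\ord_s h-1$ with equality iff $p\nmid\ord_s h$, and $\ord_s x'(s)=\ord P-1$ (valid because $x$ is transverse to $P$ and $\ord P\leq n-1<p$), one gets
\[
i_0(f_x,P)\ \geq\ i_0(f,P)-\ord P,
\]
with equality if and only if $p\nmid i_0(f,P)$; this remains correct, with the obvious conventions, when $f_x\equiv 0$ or $P\mid f_x$, cases in which $\frac{d}{ds}f(\sigma(s))=0$ forces $p\mid i_0(f,P)$ and $\mu(f)=+\infty$. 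Summing over the factors of $f_y$ (with multiplicities) and using the previous paragraph,
\[
\mu(f)=i_0(f_x,f_y)=\sum_{P}i_0(f_x,P)\ \geq\ i_0(f,f_y)-\ord f_y=c(f),
\]
with equality precisely when $p\nmid i_0(f,P)$ for every irreducible factor $P$ of $f_y$.

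\emph{Translation into condition (1), and the main obstacle.} Here I would use a Merle-type factorisation $f_y=\prod_{k=1}^{g}P^{(k)}$, in which $\ord P^{(k)}=e_{k-1}-e_k\geq e_k\geq 1$ and every branch $\gamma$ of $P^{(k)}$ satisfies $i_0(f,\gamma)=(\overline{\beta_k}/e_k)\,\ord\gamma$. Since $e_k\mid\overline{\beta_k}$, $p\nmid e_k$, and $p\nmid\ord\gamma$ (because $\ord\gamma\leq n-1<p$), one has $p\mid i_0(f,\gamma)\Leftrightarrow p\mid\overline{\beta_k}$; and every $k\in\{1,\ldots,g\}$ does occur, since $\ord P^{(k)}>0$. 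Hence ``$p\nmid i_0(f,P)$ for every factor $P$ of $f_y$'' is equivalent to condition (1), which together with the estimate proves (1)$\Leftrightarrow$(2). I expect the step of establishing this factorisation of the polar — equivalently, of controlling the contact quotients $i_0(f,\gamma)/\ord\gamma$ for the polar branches $\gamma$ — to be the main obstacle: the classical argument goes through Newton--Puiseux expansions, which are delicate in positive characteristic, and it is precisely the hypothesis $p>\ord f$ (all ramification indices $e_k$ divide $n<p$, so there is no wild ramification and, as used repeatedly above, the orders of all relevant power series are prime to $p$) that lets the classical description survive.
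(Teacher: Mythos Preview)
Your proposal is correct and is essentially the paper's proof: the paper likewise computes $i_0(f,f_y)=c(f)+n-1$ from the parametrisation (its Lemma~\ref{LL}), proves the branch-wise chain-rule inequality $i_0(f_x,\phi)+\ord\phi\geq i_0(f,\phi)$ with equality iff $p\nmid i_0(f,\phi)$ (its Claim~1), sums over the irreducible factors of $f_y$ to get $i_0(f,f_y)\leq\mu(f)+n-1$, and then invokes Merle's factorisation (its Theorem~\ref{decomposition}) to identify the equality case with $p\nmid\overline{\beta_k}$ for all $k$. One numerical slip to fix: in Merle's decomposition the packets have $\ord\psi_k=n/e_k-n/e_{k-1}$ and contact quotient $i_0(f,\phi)/\ord\phi=e_{k-1}\overline{\beta_k}/n$, not $e_{k-1}-e_k$ and $\overline{\beta_k}/e_k$ as you wrote --- but your divisibility deduction goes through unchanged with the correct values, since $n$, $e_{k-1}$ and $\ord\phi$ are all smaller than $p$.
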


\noindent We prove Theorem \ref{main result} in Section \ref{proof} of this note.

\begin{ejemplo}
Let $f(x,y)=(y^2+x^3)^2+x^5y$. Then $f$ is irreducible and $\Gamma(f)=4\bN+6\bN+13\bN$ (see \cite[Theorem 6.6]{GB-P}). By the conductor formula $c(f)=16$. Let $p=\ch \bK >\ord f=4$. If $p\neq 13$ then $\mu(f)=c(f)$ by Theorem \ref{main result}. If $p=13$ then a direct calculation shows that $\mu(f)=17$.
\end{ejemplo}

\begin{ejemplo}
\label{ej}
Let $f=x^m+y^n+\sum_{n\alpha+m\beta>nm} c_{\alpha\beta}x^{\alpha}y^{\beta}$, where $1<n<m$ and $\gcd(n,m)=1$. Then
$\Gamma(f)=\bN n +\bN m$ and $c(f)=(n-1)(m-1)$. We get $\mu(f)\geq (n-1)(m-1)$ with equality if and only if $n\not\equiv 0$ (mod $p$) and $m\not\equiv 0$ (mod $p$). To compute $\mu(f)$ one can use \cite[Theorem 3]{Furuya}.
\end{ejemplo}

\section{Factorization of the polar curve}
\label{polar}
\noindent Let $f\in \bK[[x,y]]$ be an irreducible singularity and let $\Gamma(f)=\bN \overline{\beta_0}+\cdots+\bN \overline{\beta_{g}}$ be the semigroup associated with $f$. Since $f$ is unitangent $i_0(f,x)=\ord f$ or $i_0(f,y)=\ord f$. In all this section we assume that $i_0(f,x)=\ord f$. Let
$n=\ord f$.

\begin{lema}
\label{igual caracteristica} Let $\psi=\psi(x,y)\in \bK[[x,y]]$ be an
irreducible power series such that $i_0(\psi,x)=\ord \psi$.  If
$\frac{i_0(f,\psi)}{\ord \psi}>\frac{e_{k-2}\overline{\beta_{k-1}}}{n}$ for $k\geq 2$ then
$\ord \psi \equiv 0$  $\left(mod \;\frac{n}{e_{k-1}}\right)$. 
\end{lema}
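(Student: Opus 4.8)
The statement compares a normalized intersection multiplicity $\frac{i_0(f,\psi)}{\ord\psi}$ against the "characteristic ratios" $\frac{e_{k-2}\overline{\beta_{k-1}}}{n}$ of $f$, and concludes a divisibility on $\ord\psi$. These ratios are exactly the characteristic exponents of the branch $f=0$ (in Puiseux form), so the natural approach is via Puiseux parametrizations — but we are in positive characteristic with $p>\ord f=n$, so $n$ is invertible and a Newton–Puiseux parametrization $y=\varphi(t)$, $x=t^n$ exists for $f$. Let me think through how I'd organize this.

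The key tool I would use is the standard formula expressing $i_0(f,\psi)$ in terms of the order of vanishing of $\psi$ along a parametrization of $f$: if $x=t^n$, $y=\varphi(t)$ parametrizes $f=0$, then $i_0(f,\psi)=\ord_t\psi(t^n,\varphi(t))$. Dually, parametrize $\psi=0$ by $x=s^m$, $y=\eta(s)$ where $m=\ord\psi$; then $i_0(f,\psi)=\ord_s f(s^m,\eta(s))$. The hypothesis $\frac{i_0(f,\psi)}{m}>\frac{e_{k-2}\overline{\beta_{k-1}}}{n}$ says the contact of $\psi$ with $f$ is high — high enough to pass the $(k-1)$-st characteristic exponent. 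The plan is to compare the two Puiseux series $\varphi$ and $\eta$ (after a common reparametrization to exponent $\mathrm{lcm}(n,m)$) and exploit that they agree up to an exponent strictly past $\frac{e_{k-2}\overline{\beta_{k-1}}}{n}$. The structure of the semigroup generators gives that the support of $\varphi$, below the $k$-th characteristic exponent, generates (together with $\bZ$) exactly the lattice $\frac{1}{n}\cdot(\text{multiples of }e_{k-1})$ — more precisely the exponents appearing in $\varphi$ up to that level, cleared of denominators, have gcd with $n$ equal to $e_{k-1}$. Since $\eta$ must match $\varphi$ on all those exponents, the parametrization $x=s^m$, $y=\eta(s)$ of the \emph{irreducible} $\psi$ forces $m$ to be divisible by $n/e_{k-1}$: otherwise $\eta$ could not be a well-defined (single-valued) power series in $s$ while reproducing the fractional exponents of $\varphi$ that have denominator $n/e_{k-1}$.

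**Executing the steps.**

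Concretely: first, write a Puiseux parametrization of $f$, say $x=t^n$, $y=\varphi(t)=\sum_{j} a_j t^j$, and recall that the characteristic exponents $\beta_1<\dots<\beta_g$ (with $\beta_i/n$ in lowest terms contributing the jumps in $\gcd$) satisfy $\gcd(n,\beta_1,\dots,\beta_{k-1})=e_{k-1}$, and that $\overline{\beta_{k-1}}=i_0(f,y-\text{truncation})$ relates to $\beta_{k-1}$ by the usual inductive formulas, so that $\frac{e_{k-2}\overline{\beta_{k-1}}}{n}=\frac{\beta_{k-1}}{n}$ — the $(k-1)$-st characteristic ratio. Second, parametrize $\psi$ by $x=s^m$, $y=\eta(s)$, $m=\ord\psi$; the hypothesis becomes $\ord_s f(s^m,\eta(s))>m\cdot\frac{\beta_{k-1}}{n}$. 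Third, translate this into: $\eta(s)$ agrees with some conjugate of $\varphi$ (pulled back via a common parameter) in all terms of exponent $<\frac{\beta_{k-1}}{n}$ when both are viewed as fractional power series in the common variable $x$; this is the contact statement. Fourth — the crux — observe that among the exponents $<\frac{\beta_{k-1}}{n}$ occurring in $\varphi$ expressed in $x$, there occur exponents whose reduced denominator is exactly $n/e_{k-2}$ (from the $(k-2)$-nd characteristic level) and in fact, combining them additively, the set of exponents with nonzero coefficient generates the group $\frac{e_{k-1}}{n}\bZ$ modulo $\bZ$; since $\eta(s)$ is a power series in $s$ with $x=s^m$, every exponent in $x$ appearing in $\eta$ lies in $\frac{1}{m}\bZ$. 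Hence $\frac{e_{k-1}}{n}\bZ+\bZ\subseteq\frac{1}{m}\bZ$, forcing $\frac{n}{e_{k-1}}\mid m$, which is the claim.

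**The main obstacle.**

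The delicate point is step four: justifying that the exponents below the $(k-1)$-st characteristic level that actually appear in $\varphi$ generate the full lattice $\frac{e_{k-1}}{n}\bZ/\bZ$, rather than something coarser. This is where positive characteristic could in principle bite — a coefficient $a_j$ that would be nonzero in characteristic $0$ might vanish mod $p$. However, the coefficients governing the \emph{characteristic} exponents cannot vanish (that is precisely what makes them characteristic, i.e. what forces the jump in the gcd of the exponent support), and since $p>n$ the binomial-type coefficients arising when one raises truncations to powers $\leq n$ are all invertible, so no accidental cancellation destroys the characteristic exponents. I would isolate this as the one place where $p>\ord f$ is used, and phrase it as: the semigroup $\Gamma(f)$ — equivalently the characteristic exponents — genuinely records which fractional exponents are forced to appear, because in characteristic $p>n$ a branch with semigroup $\Gamma(f)$ has a Puiseux expansion whose characteristic exponents are the standard ones, with non-vanishing leading coefficients at each characteristic level. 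Granting that, step four is a short lattice-theoretic argument, and the rest is bookkeeping with the inductive formulas relating $\overline{\beta_k}$, $e_k$, $n_k$ and the characteristic exponents.
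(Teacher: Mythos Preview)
The paper does not actually prove this lemma; it simply cites \cite[Lemma 5.6]{GB-P}, a companion paper that treats plane algebroid branches with characteristic-free tools (semi-roots and key polynomials in place of Puiseux expansions). Your Puiseux-based route is the classical characteristic-zero argument and its outline is sound, but two concrete problems keep it from being a proof of the lemma as stated.

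First, the identity you assert in step one, $\frac{e_{k-2}\overline{\beta_{k-1}}}{n}=\frac{\beta_{k-1}}{n}$, is false. For $k=2$ one has $e_0=n$ and $\overline{\beta_1}=\beta_1$, so the left side is $\beta_1$, not $\beta_1/n$; in general $e_{k-2}\overline{\beta_{k-1}}=\sum_{i=1}^{k-2}(e_{i-1}-e_i)\beta_i+e_{k-2}\beta_{k-1}$. What you actually need, and what is true, is that the hypothesis $\frac{i_0(f,\psi)}{m}>\frac{e_{k-2}\overline{\beta_{k-1}}}{n}$ is \emph{equivalent} to the Puiseux contact in $x$ exceeding $\beta_{k-1}/n$. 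That equivalence, however, is the Noether--Halphen formula expressing $i_0(f,\psi)/m$ as a piecewise-linear function of the contact, with breakpoints at the $\beta_i/n$ and values $\frac{e_{i-1}\overline{\beta_i}}{n}$ there --- it is not a one-line identity, and you have not supplied it. Once that translation is in place, your lattice argument in step four is correct.

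Second, you parametrize $\psi$ by $x=s^m$, $y=\eta(s)\in\bK[[s]]$, which requires $p\nmid m=\ord\psi$. The lemma carries no such restriction, and Section~\ref{polar} does not even assume $p>\ord f$. In the paper's only application --- to irreducible factors $\phi$ of $\partial f/\partial y$, where $\ord\phi\le n-1<p$ --- the extra hypothesis is harmless, but as a proof of the lemma in the stated generality your argument is incomplete. This is precisely why the cited reference avoids Puiseux series altogether.
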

\begin{proof} See \cite[Lemma 5.6] {GB-P}.
\end{proof}

\medskip

\noindent In what follows we need a sharpened version of  Merle's factorization theorem (see \cite[Theorem 3.1]{Merle}).

\begin{teorema}
\label{decomposition}
Suppose that $\ord f\not \equiv 0$ (mod $p$). Then $\frac{\partial f}{\partial y}=\psi_1\cdots \psi_g$ in $\bK[[x,y]],$ where

\begin{enumerate}
\item[(i)] $\ord \psi_k=\frac{n}{e_k}-\frac{n}{e_{k-1}}$ for $k\in \{1,\ldots,g\}$.
\item[(ii)] If $\phi\in \bK[[x,y]]$ is an irreducible factor of $\psi_k$, $k\in \{1,\ldots,g\}$, then \[\frac{i_0(f,\phi)}{\ord \phi}=\frac{e_{k-1}\overline{b_k}}{n},\]
\noindent and 
\item[(iii)] $\ord \phi\equiv 0$  $\left(mod \;\frac{n}{e_{k-1}}\right)$.
\end{enumerate}
\end{teorema}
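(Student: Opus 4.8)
The plan is to prove a sharpened Merle factorization by following the classical argument while keeping track of the two extra ingredients: the separability of $\partial f/\partial y$ (guaranteed by $\ord f\not\equiv 0\pmod p$) and the divisibility in Lemma \ref{igual caracteristica}. I would work with a fixed Puiseux-type parametrization $t\mapsto(t^n,\eta(t))$ of $f=0$ (available since $i_0(f,x)=\ord f=n$ and, in characteristic $p>n$ or at least $p\nmid n$, Puiseux expansions behave as in characteristic zero on the relevant exponents). The partial derivative $\partial f/\partial y$ has order $n-1$ because $\ord f\not\equiv 0\pmod p$ forces the $y^n$-type leading term to survive differentiation; this gives the baseline $\sum_k\ord\psi_k=\sum_k\left(\frac{n}{e_k}-\frac{n}{e_{k-1}}\right)=n-\frac{n}{e_0}=n-1$, so (i), once proved for each $k$, is consistent.

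\textbf{Step 1 (contact values of polar branches).} For each irreducible factor $\phi$ of $\partial f/\partial y$ I would compute $\frac{i_0(f,\phi)}{\ord\phi}$ by substituting a parametrization of $\phi=0$ into $f$ and differentiating, exactly as in Merle's original proof: the contact exponent of $\phi$ with the branch $f=0$ is forced into the finite list of ``characteristic'' Teissier-type ratios $\frac{e_{k-1}\overline{\beta_k}}{n}$, $k=1,\dots,g$. The only place where characteristic could intervene is in the chain-rule computation $\frac{d}{dt}f(t^n,\eta(t))=0$ versus the derivatives $\partial f/\partial x$, $\partial f/\partial y$ evaluated on the parametrization; the hypothesis $p>\ord f$ keeps all the binomial coefficients and exponents appearing here invertible, so the classical estimates survive verbatim. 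This yields (ii) and simultaneously shows that the factors of $\partial f/\partial y$ split into $g$ groups $\psi_1,\dots,\psi_g$ according to their contact value.

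\textbf{Step 2 (multiplicities in each group).} To get (i) I would count, for each $k$, the total order $\ord\psi_k$. One way is the intersection-number bookkeeping: $i_0\!\left(f,\frac{\partial f}{\partial y}\right)=\mu(f)+\ord f-1$ is not available in positive characteristic, so instead I would use $i_0\!\left(f,\frac{\partial f}{\partial y}\right)=\sum_k i_0(f,\psi_k)=\sum_k\ord\psi_k\cdot\frac{e_{k-1}\overline{\beta_k}}{n}$ together with the known value of the left side coming from the parametrization (it equals $\ord_t \frac{\partial f}{\partial y}(t^n,\eta(t))=\sum_{k}\left(\frac{n}{e_k}-\frac{n}{e_{k-1}}\right)\frac{e_{k-1}\overline{\beta_k}}{n}$, which one reads off from the factorization $f=\prod(y-\text{conjugates})$ and the standard formula for $\ord_t\bigl(\eta(\varepsilon t)-\eta(t)\bigr)$ over $n$-th roots of unity $\varepsilon$). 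Matching the two expressions group by group, using the strict inequalities $e_{k-1}\overline{\beta_k}<e_k\overline{\beta_{k+1}}$ to guarantee the terms are separated, forces $\ord\psi_k=\frac{n}{e_k}-\frac{n}{e_{k-1}}$.

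\textbf{Step 3 (divisibility, part (iii)).} Finally (iii) is where Lemma \ref{igual caracteristica} does the work. Let $\phi$ be an irreducible factor of $\psi_k$; by (ii), $\frac{i_0(f,\phi)}{\ord\phi}=\frac{e_{k-1}\overline{\beta_k}}{n}$. I need $\frac{e_{k-1}\overline{\beta_k}}{n}>\frac{e_{k-2}\overline{\beta_{k-1}}}{n}$, i.e.\ $e_{k-1}\overline{\beta_k}>e_{k-2}\overline{\beta_{k-1}}$, which for $k\ge 2$ follows from the stated inequality $e_{k-2}\overline{\beta_{k-1}}<e_{k-1}\overline{\beta_k}$ (the indexing in the excerpt reads $e_{k-1}\overline{\beta_k}<e_k\overline{\beta_{k+1}}$, which reindexes to exactly this); for $k=1$ the divisibility $\ord\phi\equiv 0\pmod{n/e_0}=\pmod 1$ is vacuous. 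Applying Lemma \ref{igual caracteristica} with this $\phi$ in the role of $\psi$ then gives $\ord\phi\equiv 0\pmod{\frac{n}{e_{k-1}}}$, which is (iii). I expect \textbf{Step 2} to be the main obstacle: in characteristic $p$ one cannot invoke the Milnor-number identity to pin down $i_0(f,\partial f/\partial y)$, so the order computation $\ord_t\frac{\partial f}{\partial y}(t^n,\eta(t))$ must be done directly and carefully, checking that no cancellation of leading terms occurs — this is exactly where $p>\ord f$ is used a second time, to ensure that the relevant coefficients ($n$, the $\overline{\beta_k}/e_{k-1}$, and the differences of roots of unity) do not vanish mod $p$.
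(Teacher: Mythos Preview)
Your overall architecture matches the paper's: parts (i) and (ii) are Merle's theorem carried over to positive characteristic (the paper simply cites Merle and Delgado and asserts the proof goes through), and part (iii) is an application of Lemma~\ref{igual caracteristica}. Two points, however, need correction.

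First, you repeatedly invoke the hypothesis $p>\ord f$, but the theorem only assumes $\ord f\not\equiv 0\pmod p$. These are not the same, and the weaker hypothesis suffices: since $p\nmid n$, the $n$-th roots of unity in $\bK$ are distinct, the coefficient $n$ is invertible, and nothing else in Merle's contact computation or in the order formula $\ord_t\prod_{\varepsilon\neq 1}(\eta(t)-\eta(\varepsilon t))=\sum_k(e_{k-1}-e_k)\beta_k$ requires $p>n$. Your closing worry about ``$\overline{\beta_k}/e_{k-1}$ vanishing mod $p$'' is misplaced; no such invertibility is needed for (i)--(iii).

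Second, Step~2 as written does not work. Knowing the single number $i_0(f,\partial f/\partial y)=\sum_k \ord\psi_k\cdot\frac{e_{k-1}\overline{\beta_k}}{n}$ together with $\sum_k\ord\psi_k=n-1$ gives two linear relations in $g$ unknowns; the strict monotonicity of the contact values $e_{k-1}\overline{\beta_k}/n$ does \emph{not} let you ``match group by group'' (for $g\ge 3$ there are many nonnegative integer solutions). Merle's actual argument uses the semiroots $f_0,\dots,f_{g-1}$ of $f$: computing each $i_0(f_j,\partial f/\partial y)$ both from the parametrization of $f_j$ and from $\sum_k i_0(f_j,\psi_k)$ yields a triangular system that pins down every $\ord\psi_k$. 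Finally, in Step~3 you apply Lemma~\ref{igual caracteristica} to $\phi$ without checking its hypothesis $i_0(\phi,x)=\ord\phi$; the paper supplies this by observing $i_0(\partial f/\partial y,x)=n-1=\ord(\partial f/\partial y)$, so $x$ is transversal to every irreducible factor $\phi$ of $\partial f/\partial y$.
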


\noindent \begin{proof} The proof of the existence of the factorization $\frac{\partial f}{\partial y}=\psi_1\cdots \psi_g$ with properties $(i)$ and $(ii)$ given by Merle for the generic polar in the case $\bK=\bC$  works in our situation (see also \cite{Delgado}). To check $(iii)$ observe that  $i_0\left(\frac{\partial f}{\partial y},x\right)=n-1$ and consequently $i_0(\phi,x)=\ord \phi$ for any irreducible factor $\phi$ of $\frac{\partial f}{\partial y}$. Then use Lemma \ref{igual caracteristica}.
\end{proof}

\section{Proof of the main result}
\label{proof}
\noindent We keep the notation and assumptions of Section \ref{polar}. In particular $f\in \bK[[x,y]]$ is irreducible and $i_0(f,x)=\ord f$. We let $n=\ord f$. The following lemma is well-known and may be  deduced from the formula
$\overline{{\cal O}}f'_y={\cal C}{\cal D}_x$, where ${\cal D}_x$ is the different of $\overline{{\cal O}}$ with respect to the ring $\bK[[x]]$(see \cite[p. 10]{Zariski-1986} and \cite[Aphorism 5]{Abhyankar}).

\begin{lema} 
\label{LL}
Suppose that $\ord f\not\equiv 0$ (mod $p$). Then 
\[i_0\left(f, \frac{\partial f}{\partial y}\right)=c(f)+\ord f-1.\]
\end{lema}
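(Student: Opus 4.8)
The plan is to exploit the factorization $\frac{\partial f}{\partial y}=\psi_1\cdots\psi_g$ provided by Theorem~\ref{decomposition}, which is available because we are assuming $\ord f\not\equiv 0$ (mod $p$). Since intersection numbers are additive with respect to multiplication, I would first write
\[
i_0\left(f,\frac{\partial f}{\partial y}\right)=\sum_{k=1}^{g}i_0(f,\psi_k),
\]
and then compute each $i_0(f,\psi_k)$ separately. For a fixed $k$, decompose $\psi_k$ into irreducible factors $\phi$; by part~(ii) of Theorem~\ref{decomposition} every such $\phi$ satisfies $i_0(f,\phi)=\frac{e_{k-1}\overline{\beta_k}}{n}\cdot\ord\phi$, so summing over the irreducible factors of $\psi_k$ and using part~(i) gives
\[
i_0(f,\psi_k)=\frac{e_{k-1}\overline{\beta_k}}{n}\cdot\ord\psi_k=\frac{e_{k-1}\overline{\beta_k}}{n}\left(\frac{n}{e_k}-\frac{n}{e_{k-1}}\right)=\left(\frac{e_{k-1}}{e_k}-1\right)\overline{\beta_k}=(n_k-1)\overline{\beta_k}.
\]

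Summing over $k$ then yields $i_0\left(f,\frac{\partial f}{\partial y}\right)=\sum_{k=1}^{g}(n_k-1)\overline{\beta_k}$. Comparing with the conductor formula quoted in Section~\ref{MR}, namely $c(f)=\sum_{k=1}^{g}(n_k-1)\overline{\beta_k}-\overline{\beta_0}+1$, and recalling $\overline{\beta_0}=\ord f$, I get
\[
i_0\left(f,\frac{\partial f}{\partial y}\right)=c(f)+\overline{\beta_0}-1=c(f)+\ord f-1,
\]
which is exactly the claimed identity. So the whole argument is essentially: additivity of $i_0(f,-)$, the three properties of Merle's factorization, and a one-line comparison with the conductor formula.

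The only genuine subtlety — and the step I would check most carefully — is the passage from "$i_0(f,\phi)/\ord\phi$ has a common value over all irreducible factors $\phi$ of $\psi_k$" to "$i_0(f,\psi_k)=(\text{that value})\cdot\ord\psi_k$." This is immediate once one writes $\psi_k=\prod_j\phi_j$ and uses both additivity of intersection number and additivity of order, but it does rely on $\psi_k$ actually factoring into irreducibles of the stated type rather than, say, $\psi_k$ having a unit factor or being identically a constant; the conventions in Theorem~\ref{decomposition} (each $\psi_k$ has positive order $\frac{n}{e_k}-\frac{n}{e_{k-1}}>0$ since $e_{k-1}>e_k$) rule this out. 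Apart from that, one should note that the hypothesis $\ord f\not\equiv 0$ (mod $p$) is used precisely to invoke Theorem~\ref{decomposition}, and nothing more about the characteristic is needed here — in particular this lemma does not require $p>\ord f$, only $p\nmid\ord f$. The alternative, ring-theoretic route through the relation $\overline{\mathcal O}f'_y=\mathcal{C}\mathcal{D}_x$ mentioned in the text would also work, but the combinatorial proof via Merle's theorem is cleaner and self-contained given what has already been established.
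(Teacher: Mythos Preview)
Your argument is correct and lands on the same intermediate identity $i_0\!\left(f,\frac{\partial f}{\partial y}\right)=\sum_{k=1}^g(n_k-1)\overline{\beta_k}$ that the paper obtains, after which both you and the paper simply invoke the conductor formula. The route to that identity, however, is genuinely different: the paper does \emph{not} appeal to Theorem~\ref{decomposition} here. Instead it takes a good parametrization $(t^n,y(t))$ of $f=0$ (available because $p\nmid n$), writes
\[
i_0\!\left(f,\frac{\partial f}{\partial y}\right)=\sum_{\epsilon\in\mathbf U(n)\setminus\{1\}}\ord\bigl(y(t)-y(\epsilon t)\bigr)=\sum_{k=1}^g(e_{k-1}-e_k)\beta_k
\]
in terms of the characteristic exponents $\beta_k$, and then converts to the $\overline{\beta_k}$ via the standard recursion $\overline{\beta_{k+1}}=n_k\overline{\beta_k}+\beta_{k+1}-\beta_k$.

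The one caution worth flagging is a question of logical dependence. Merle's original proof of the factorization in Theorem~\ref{decomposition} (and the proofs cited in the paper) \emph{begin} by computing $i_0\!\left(f,\frac{\partial f}{\partial y}\right)$ via exactly the parametrization argument above, and then combine that total with the constraint on the possible values of $i_0(f,\phi)/\ord\phi$ to pin down the orders of the $\psi_k$. So using Theorem~\ref{decomposition} to prove Lemma~\ref{LL} risks circularity, depending on which proof of the factorization you have in hand. Your derivation is valid as a formal deduction from Theorem~\ref{decomposition} as stated, but the paper's direct parametrization argument is self-contained and is presumably chosen precisely to keep the logical order clean.
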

\noindent \begin{proof} Since $n\not\equiv 0$ (mod $p$) the irreducible curve $f=0$ has a good parametrization of the form  $(t^n, y(t))$. Let $\beta_0=n,\beta_1,\ldots, \beta_g$ be the characteristic of $(t^n,y(t))$. Then $\overline{\beta_0}=\beta_0$,
$\overline{\beta_1}=\beta_1$ and $\overline{\beta_{k+1}}=n_k\overline{\beta_k}+\beta_{k+1}-\beta_{k}$ for $k\in \{1,\ldots, g-1\}$ (see \cite[Section 3]{Zariski-1986}). 

\medskip

\noindent Denote by $\mathbf U(n)$ the group of $n$th roots of unity in $\bK$. A simple computation shows that 

\[i_0\left(f, \frac{\partial f}{\partial y}\right)=\sum_{\epsilon \in \mathbf U(n)\backslash \{1\}}\ord (y(t)-y(\epsilon t))=\sum_{k=1}^g(e_{k-1}-e_k)\beta_k=\sum_{k=1}^g(n_k-1)\overline{\beta_k}.
\]

\noindent Now, the lemma follows from the conductor formula.
\end{proof}

\begin{coro}
If $\ord f\not\equiv 0$ (mod $p$) then $\mu(f)=c(f)$ if and only if $i_0\left(f, \frac{\partial f}{\partial y}\right)=\mu(f)+\ord f-1$.
\end{coro}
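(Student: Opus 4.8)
The plan is to deduce the corollary directly from Lemma \ref{LL}. Observe that the corollary is essentially a restatement of the lemma together with a tautology. Indeed, by Lemma \ref{LL} we already know, under the hypothesis $\ord f\not\equiv 0$ (mod $p$), that $i_0\!\left(f,\frac{\partial f}{\partial y}\right)=c(f)+\ord f-1$ holds \emph{unconditionally}. Therefore the equation $i_0\!\left(f,\frac{\partial f}{\partial y}\right)=\mu(f)+\ord f-1$ holds if and only if $\mu(f)+\ord f-1=c(f)+\ord f-1$, and cancelling the common term $\ord f-1$ from both sides gives the equivalence with $\mu(f)=c(f)$.

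Concretely, first I would invoke Lemma \ref{LL} to record the identity $i_0\!\left(f,\frac{\partial f}{\partial y}\right)=c(f)+\ord f-1$. Then I would argue both implications. For the forward direction, assume $\mu(f)=c(f)$; substituting into the lemma's identity yields $i_0\!\left(f,\frac{\partial f}{\partial y}\right)=\mu(f)+\ord f-1$. For the converse, assume $i_0\!\left(f,\frac{\partial f}{\partial y}\right)=\mu(f)+\ord f-1$; comparing with the lemma's identity gives $\mu(f)+\ord f-1=c(f)+\ord f-1$, hence $\mu(f)=c(f)$.

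There is essentially no obstacle here: the corollary is a formal consequence of Lemma \ref{LL}, obtained by a one-line algebraic manipulation. The only point worth noting is that the hypothesis $\ord f\not\equiv 0$ (mod $p$) is exactly what is needed to apply Lemma \ref{LL}, and the good parametrization $(t^n,y(t))$ underlying that lemma is available precisely because $n\not\equiv 0$ (mod $p$); this hypothesis is of course implied by the standing assumption $p>\ord f$ of the main theorem, so the corollary will be applicable in the proof of Theorem \ref{main result}.
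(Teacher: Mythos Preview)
Your proposal is correct and matches the paper's approach: the corollary is stated immediately after Lemma \ref{LL} without proof, since it is an immediate formal consequence of that lemma exactly as you describe.
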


\noindent If $\ch \bK=0$ then $i_0\left(f, \frac{\partial f}{\partial y}\right)=\mu(f)+i_0(f,x)-1$ (see \cite[Chap. II, Proposition 1.2]{Teissier}) for any reduced series $f\in \bK[[x,y]]$, whence $\mu(f)=c(f)$ for irreducible $f$ in characteristic zero.

\begin{lema} 
\label{mu}
Suppose that $p>\ord f$. Then $i_0\left(f, \frac{\partial f}{\partial y}\right)\leq \mu(f)+\ord f-1$ with equality if and only if $\overline{\beta_k}\not\equiv 0$ (mod $p$) for $k\in \{1, \ldots, g\}$.
\end{lema}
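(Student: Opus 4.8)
The plan is to compute the intersection number $i_0(f,\partial f/\partial y)$ through the Merle-type factorization from Theorem \ref{decomposition} and compare it with a known lower bound for $\mu(f)$ in positive characteristic. Since $p > \ord f = n$ in particular $n \not\equiv 0 \pmod p$, so Theorem \ref{decomposition} applies and gives $\partial f/\partial y = \psi_1 \cdots \psi_g$ with $i_0(f,\psi_k) = (\ord \psi_k)\cdot \frac{e_{k-1}\overline{\beta_k}}{n}$ for each irreducible factor, hence $i_0(f,\psi_k) = \left(\frac{n}{e_k} - \frac{n}{e_{k-1}}\right)\frac{e_{k-1}\overline{\beta_k}}{n} = (n_k - 1)\overline{\beta_k}$. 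Summing, $i_0(f,\partial f/\partial y) = \sum_{k=1}^g (n_k-1)\overline{\beta_k}$, which by the conductor formula equals $c(f) + \ord f - 1$. So in fact $i_0(f,\partial f/\partial y) = c(f) + n - 1$ holds as soon as $p > n$ — this recovers Lemma \ref{LL} in this range. The inequality $i_0(f,\partial f/\partial y) \le \mu(f) + n - 1$ is then equivalent to $c(f) \le \mu(f)$, which is the well-known Milnor inequality $\mu(f) \ge 2\delta(f) - r(f) + 1 = c(f)$ for irreducible $f$ (cited in the introduction via \cite{Deligne}, \cite{Melle}). Thus the inequality part of Lemma \ref{mu} is essentially free.

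The substance is the equality characterization. First I would reduce, via the identity $i_0(f,\partial f/\partial y) = c(f) + n - 1$ just established and Lemma \ref{LL}'s reasoning, to showing: $\mu(f) = c(f)$ if and only if $\overline{\beta_k} \not\equiv 0 \pmod p$ for all $k$. The direction ``$\overline{\beta_k}\not\equiv 0$ for all $k$ $\Rightarrow$ $\mu(f) = c(f)$'' I would obtain by invoking the result of Nguyen \cite{Nguyen} (or the Newton-nondegeneracy criterion of \cite{BGreuel}) cited in the introduction: $\mu(f) = c(f)$ holds whenever $p$ exceeds the intersection number of $f$ with its generic polar. Actually a cleaner route: if all $\overline{\beta_k} \not\equiv 0 \pmod p$, then none of the exponents appearing in a suitable (quasi-homogeneous-by-strata) model of $f$ are divisible by $p$, and one shows directly that the partials $\partial f/\partial x, \partial f/\partial y$ generate an ideal of the expected colength by an explicit deformation/filtration argument on $\bK[[x,y]]/(\partial f/\partial x, \partial f/\partial y)$, mimicking the characteristic-zero computation grade by grade with respect to the weighting by the $\overline{\beta_k}$. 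The key point making this work is that the leading forms of the partials with respect to each Newton-stratum filtration are nonzero precisely because the relevant exponent is a unit mod $p$.

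For the converse ``$\mu(f) = c(f)$ $\Rightarrow$ $\overline{\beta_k} \not\equiv 0 \pmod p$ for all $k$'', I would argue contrapositively: suppose $\overline{\beta_j} \equiv 0 \pmod p$ for some $j$. Using the factorization $\partial f/\partial y = \psi_1\cdots\psi_g$ and part (iii) of Theorem \ref{decomposition}, every irreducible factor $\phi$ of $\psi_j$ has $\ord \phi \equiv 0 \pmod{n/e_{j-1}}$; combined with $e_{j-1}\overline{\beta_j}/n$ being the polar quotient, one extracts that along the corresponding polar branch the jet of $f$ acquires a ``wild'' term. Concretely, I would estimate $\dim_{\bK} \bK[[x,y]]/(\partial f/\partial x, \partial f/\partial y)$ from below strictly: when $p \mid \overline{\beta_j}$, the Teissier–Bernstein-type bound $\mu(f) \geq i_0(f,\partial f/\partial y) - n + 1$ becomes strict because $\partial f/\partial x$ fails to meet $f$ transversally along the $\psi_j$-part in the way it does in characteristic zero — the relevant term in the parametrization $y(t)$ contributing $\overline{\beta_j}$ has vanishing derivative contribution mod $p$, forcing extra length in the Milnor algebra. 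Making this strictness precise — identifying exactly the extra dimension contributed and showing it is nonzero — is the main obstacle, and I expect it to require a careful analysis of $i_0(f,\partial f/\partial x)$ and of the module $(\partial f/\partial x,\partial f/\partial y)$ along each Merle factor $\psi_k$, isolating the contribution of the factors $\psi_j$ with $p \mid \overline{\beta_j}$. Once that local contribution is shown to be positive exactly in the divisible case and zero otherwise, summing over $k$ gives both the inequality and the sharp equality condition simultaneously.
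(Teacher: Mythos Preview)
Your reduction of the inequality to the Deligne--Melle bound $\mu(f)\ge c(f)$ is valid, but it is not the paper's route and, more importantly, it leaves you with no handle on the equality case. The paper does \emph{not} invoke $\mu(f)\ge c(f)$; it proves the inequality and its equality condition in one stroke by a chain-rule computation along each irreducible polar factor.

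Your proposal for the equality characterization has a genuine gap. Invoking \cite{Nguyen} does not work: Nguyen's hypothesis is $p>i_0(f,\text{generic polar})=c(f)+n-1$, which is much stronger than $p>n$ together with $\overline{\beta_k}\not\equiv 0\pmod p$; for a branch with small $n$ and large conductor his criterion is simply unavailable. The ``deformation/filtration'' alternative and the contrapositive sketch are both programmatic rather than arguments --- you yourself flag the strictness step as ``the main obstacle'' without resolving it.

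The idea you are missing is this. Fix an irreducible factor $\phi$ of $\partial f/\partial y$ and a good parametrization $(x(t),y(t))$ of $\phi=0$. Since $\ord\phi\le\ord(\partial f/\partial y)=n-1<p$, one has $\ord x'(t)=\ord x(t)-1$. Because $\partial f/\partial y$ vanishes along $\phi$, the chain rule collapses to
\[
\frac{d}{dt}f(x(t),y(t))=\frac{\partial f}{\partial x}(x(t),y(t))\,x'(t).
\]
Taking orders gives $i_0(\partial f/\partial x,\phi)+\ord\phi\ge i_0(f,\phi)$, with equality precisely when $i_0(f,\phi)\not\equiv 0\pmod p$ (the only way differentiation in $t$ fails to drop the order by one). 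Summing over all $\phi$ yields
\[
i_0\Bigl(f,\tfrac{\partial f}{\partial y}\Bigr)\le i_0\Bigl(\tfrac{\partial f}{\partial x},\tfrac{\partial f}{\partial y}\Bigr)+\ord\tfrac{\partial f}{\partial y}=\mu(f)+n-1,
\]
with equality if and only if $i_0(f,\phi)\not\equiv 0\pmod p$ for every irreducible factor $\phi$. Finally, part (iii) of Theorem~\ref{decomposition} gives $\ord\phi=m_k\,n/e_{k-1}$ for an integer $m_k$ with $1\le m_k\le\ord\phi<p$, hence $m_k\not\equiv 0\pmod p$, and part (ii) gives $i_0(f,\phi)=m_k\overline{\beta_k}$. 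Thus $i_0(f,\phi)\not\equiv 0\pmod p$ for all $\phi$ dividing $\psi_k$ if and only if $\overline{\beta_k}\not\equiv 0\pmod p$. This is where part (iii) of the sharpened Merle factorization is actually used --- in your sketch it appears but is not put to work.
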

\noindent \begin{proof} Let us begin with 

\medskip

\noindent{\em Claim 1:} Suppose that $p> \ord f$. Then for every irreducible factor $\phi$ of $\frac{\partial f}{\partial y}$ we have  $i_0\left(\frac{\partial f}{\partial x},\phi\right)+ \ord \phi \geq i_0(f,\phi)$ with equality if and only if $i_0(f,\phi)\not\equiv 0$ (mod $p$).

\medskip

\noindent {\em Proof of  Claim 1} Let $\phi$ be an irreducible factor of $\frac{\partial f}{\partial y}$. Then $\ord \phi \leq \ord
\frac{\partial f}{\partial y}=\ord f-1$. Let $(x(t),y(t))$ be a good parametrization of $\phi=0$. Then $\ord x(t)=i_0(x,\phi)=\ord \phi<\ord f\leq p$ and consequently $\ord x(t)\not\equiv 0$ (mod $p$) which implies $\ord x'(t)=\ord x(t)-1$. We have
\[\frac{d}{dt}f(x(t)y(t))=\frac{\partial f}{\partial x}(x(t),y(t))x'(t).\]

\noindent Taking orders gives $\ord \frac{d}{dt}f(x(t)y(t))\geq \ord f(x(t),y(t))-1,$ with equality if and only if $\ord f(x(t),y(t))\not\equiv 0$ (mod $p$), and $\ord \frac{\partial f}{\partial x}(x(t),y(t))x'(t)=\ord \frac{\partial f}{\partial x}(x(t),y(t))+\ord x(t)-1.$

\medskip

\noindent Therefore $\ord \frac{\partial f}{\partial x}(x(t),y(t))+\ord x(t)\geq \ord f(x(t),y(t))$ with equality if and only if 
$\ord f(x(t),y(t))\not\equiv 0$ (mod $p$). Passing to the intersection numbers we get the claim.

\medskip 
\noindent{\em Claim 2:} Suppose that $p> \ord f$ and let $\frac{\partial f}{\partial y}=\psi_1\cdots \psi_g$ be the Merle factorization of the polar $\frac{\partial f}{\partial y}$. Let $\phi$ be an irreducible factor of $\psi_k$. Then $i_0(f,\phi)\not\equiv 0$ (mod $p$) if and only if $i_0(f,\phi)\not\equiv 0$ (mod $\overline{\beta_k}$).

\medskip

\noindent {\em Proof of  Claim 2} By Theorem \ref{decomposition} {\em (iii)} we can write $\ord \phi=m_k\frac{n}{e_{k-1}},$ where $m_k\geq 1$ is an integer. Since $\ord \phi \leq \ord \frac{\partial f}{\partial y}=\ord f-1<p$ we have $\ord \phi \not\equiv 0$ (mod $p$) which implies $m_k\not\equiv 0$ (mod $p$). By Theorem \ref{decomposition} {\em (ii)} $i_0(f,\phi)=
\left(\frac{e_{k-1}\overline{\beta_k}}{n}\right)\ord \phi=m_k \overline{\beta_k}$. Therefore $i_0(f,\phi)\not\equiv 0$ (mod $p$) if and only if $\overline{\beta_k}\not\equiv 0$ (mod $p$).

\medskip

\noindent Now we continue with the proof of the lemma. Let $P$ be the set of all irreducible factors of $\frac{\partial f}{\partial y}$. Then, by Claim 1:

\begin{eqnarray*}
i_0\left(f,\frac{\partial f}{\partial y}\right)&=&\sum_{\phi \in P}e(\phi) i_0(f,\phi)\leq \sum_{\phi \in P}e(\phi) i_0\left(\frac{\partial f}{\partial x},\phi \right)=\mu(f)+\ord \frac{\partial f}{\partial y} \\
&=&\mu(f)+\ord f-1,
\end{eqnarray*}

\noindent where $e(\phi)=\max\left \{e\,:\,\phi^e \;\hbox{\rm divides } \frac{\partial f}{\partial y}\right \}$ and with equality if and only if $i_0(f,\phi)\not\equiv 0$ (mod $p$) for all $\phi \in P$. According to Claim 2 $i_0(f,\phi)\not\equiv 0$ (mod $p$) for all $\phi \in P$ if and only if $\overline{\beta_k}\not\equiv 0$ (mod $p$) for $k\in \{1,\ldots,g\}$ and the lemma follows.
\end{proof}

\begin{nota}
If $p<\ord f$ then the proof of Lemma \ref{mu} fails, even if $\ord f\not\equiv 0$ (mod $p$). Take $f=x^{p+2}+y^{p+1}+x^{p+1}y$.
\end{nota}

\noindent {\em Proof of Theorem \ref{MR}}
 Let $f\in \bK[[x,y]]$ be an irreducible singularity. Suppose that $p=\ch \bK> \ord f$. Then by Lemma \ref{LL} $\mu(f)=c(f)$ is equivalent to Teissier's formula $i_0\left(f, \frac{\partial f}{\partial y}\right)=\mu(f)+\ord f-1$, which by Lemma \ref{mu} holds if and only if $\overline{\beta_k}\not\equiv 0$ (mod $p$) for $k\in \{1,\ldots,g\}$.
 
 \medskip
 
\noindent {\bf Conjecture}\\  Let $f\in \bK[[x,y]]$ be an irreducible singularity with the semigroup $\Gamma(f)=\bN \overline{\beta_0}+\cdots+\bN \overline{\beta_{g}}$. Then $\mu(f)=c(f)$ if and only if $\overline{\beta_k}\not\equiv 0$ (mod 
$\ch \bK$) for $k\in \{0,\ldots,g\}$.

\medskip

\noindent The conjecture is true if $\Gamma(f)=\bN \overline{\beta_0}+\bN \overline{\beta_{1}}$ (cf. Example \ref{ej} of this note).

\medskip
\noindent
{\small Evelia Rosa Garc\'{\i}a Barroso\\
Departamento de Matem\'aticas, Estad\'{\i}stica e I.O. \\
Secci\'on de Matem\'aticas, Universidad de La Laguna\\
Apartado de Correos 456\\
38200 La Laguna, Tenerife, Espa\~na\\
e-mail: ergarcia@ull.es}

\medskip

\noindent {\small Arkadiusz P\l oski\\
Department of Mathematics and Physics\\
Kielce University of Technology\\
Al. 1000 L PP7\\
25-314 Kielce, Poland\\
e-mail: matap@tu.kielce.pl}

\end{document}